\documentclass[11pt,reqno]{amsart}
\usepackage{amsmath,amsthm,amscd,amsfonts,amssymb,color}
\usepackage{cite}
\usepackage[bookmarksnumbered,colorlinks,plainpages]{hyperref}
\usepackage{graphicx} 

\usepackage{amsmath, amssymb, amsthm}
\usepackage{booktabs}
\usepackage{array}
\usepackage{longtable}

\newtheorem{theorem}{Theorem}[section]

\newtheorem{corollary}[theorem]{Corollary}
\theoremstyle{definition}
\newtheorem{definition}[theorem]{Definition}
\newtheorem{example}[theorem]{Example}
\theoremstyle{remark}
\newtheorem{remark}[theorem]{Remark}
\numberwithin{equation}{section}
\def\DJ{\leavevmode\setbox0=\hbox{D}\kern0pt\rlap
	{\kern.04em\raise.188\ht0\hbox{-}}D}
\begin{document}
	\title{\bf{A Unified Approach to Non-Unique Fixed Points via Modified C-Class ĆiriC-Type Contractions}}
	\author[O. J. Omidire]{O. J. Omidire}
	\address{\textbf{O. J. Omidire}
		Department of Mathematical Sciences,
		Osun State University, Osogbo, Nigeria.}
	\email{olaoluwa.omidire@uniosun.edu.ng, omidireolaoluwa@gmail.com}
\author[M. O. Olatinwo]{M. O. Olatinwo}
\address{\textbf{M. O. Olatinwo}
	Department of Mathematics,
	Obafemi Awolowo University, Ile Ife.}
\email{memudu.olatinwo@gmail.com}
	\author[K. R. Tijani]{K. R. Tijani}
\address{\textbf{K. R. Tijani}
	Department of Mathematical Sciences,
	Osun State University, Osogbo, Nigeria.}
\email{kamilu.tijani@uniosun.edu.ng}
\author[B. T. Ishola]{B. T. Ishola}
\address{\textbf{B. T. Ishola}
	Department of Mathematics,
	Obafemi Awolowo University, Ile Ife.}
\email{isholatunde@gmail.com}
\begin{abstract}
	Fixed point theory is central to nonlinear analysis, yet classical results often assume strict contractivity and guarantee uniqueness, limiting applicability to mappings with inherently non-unique fixed points. This paper addresses this gap by introducing a modified C-class function and establishing new non-unique fixed point theorems for Ćirić-type mappings in complete metric spaces.
	
	We construct a generalized contractive inequality using a three-variable modified C-class function combined with an altering distance function. Under orbital continuity and orbital completeness, we prove that Picard iteration converges to a fixed point without requiring global Lipschitz conditions or strict contractivity. Carefully designed examples illustrate the sharpness and generality of the framework, strictly extending classical Ćirić and Chatterjea contractions as well as hybrid $(\Phi,\psi)$-contractions.
	
	Our main contributions are: (i) a unified framework encompassing classical and generalized C-class contractions, (ii) existence results for non-unique fixed points, (iii) a simple criterion for uniqueness when desired, and (iv) practical validation through nontrivial examples. These results provide a robust foundation for further research in nonlinear operator theory, iterative methods, and applications in optimization and variational problems.
\end{abstract}
	\maketitle
	\textbf{Keywords/phrases:} Non-unique fixed point, C-class function, Ciric-type mapping.	~~\\
	MSC2020 Mathematics Subject Classification:47H10; 47H09\\
	\section{Introduction}
	The development of fixed point theory has evolved from classical contraction mapping principles  of Banach (see \cite{Banach1922}) toward increasingly flexible contractive frameworks capable of capturing nonlinear phenomena beyond strict Banach-type conditions. A decisive shift occurred with the introduction of generalized contraction schemes that relax linear Lipschitz-type constraints and incorporate hybrid or functional control conditions.
	
	A foundational contribution in this direction was made by Rakotch 1962 \cite{Rakotch1962} when he introduced a monotone decreasing function $\alpha: (0, \infty) \to [0,1)$ such that, for each $w,s \in G, ~~w \neq s,$ 
	\begin{equation}
		\delta(Zw,Zs) \leq \alpha \Big( \delta(w,s) \Big),
	\end{equation}
	where $Z$ is an operator on $G.$\\
	This was followed by  Chatterjea \cite{Chatterjea1972} who introduced generalization of contraction mappings, now popularly referred to as Chatterjea contractions. His condition, involved dual conditions of the form
	\begin{equation}
		\delta(Zw,Zs) \leq a\Big( \delta(w,Zs)+\delta(s,Zw)\Big)~~~~~a \in [0,\frac{1}{2})
	\end{equation}
	which further expanded the class of admissible nonlinear operators.  Ciric \cite{Ciric1974a}, introduced what is now known as Ciric-type quasi-contractions. A mapping $Z: G \to G$ was referred to as quasi-contraction iff 
	\begin{equation}\label{eqn1.3}
		\delta(Zw, Zs)\leq q. max\{\delta(w, s); \delta(w, Zw); \delta(s, Zs); \delta(w, Zs); d(s, Zw)\}
	\end{equation}
	for some $q < 1$ and for all $w,s \in G.$
	This mapping extended the Banach contraction principle by replacing the single-distance condition with a maximum involving multiple interpoint distances. This broader contractive inequality provides framework for solving nonlinear operators not covered by Banach contraction and some of its generalizations.\\
	Shortly thereafter, in \cite{Ciric1974}, Ciric demonstrated that under certain contractive structures, fixed point (of some non-linear operator equations and related problems, if exists) may not be unique, thereby initiating systematic study of non-unique fixed point phenomena within generalized contractive settings.\\
	He considered operator $Z$ on $G$ which are not necessarily continuous and which satisfy the condition
	\begin{equation}
		min\{\delta(Zw,Zs), \delta(w,Zw), \delta(s,Zs),\}-min\{\delta(w,Zs), \delta(s,Zw)\} \leq q.\delta(w,s).
	\end{equation}

	The study of non-unique fixed points gained independent momentum with the realization that multiplicity naturally arises in nonlinear operator equations, differential inclusions, and optimization theory. Ciric’s 1974 work \cite{Ciric1974} formally established that suitably weakened contraction inequalities may guarantee existence without uniqueness, thereby separating the two properties conceptually. This distinction has since become central in modern nonlinear analysis.
	
	In subsequent decades, research shifted toward functional and hybrid contractive conditions, replacing scalar contraction constants with auxiliary control functions. A notable advancement in this direction was the introduction of C-class functions by Ansari (2014) in his note  \cite{Ansari2014}, Ansari introduced the concept of $C-$class functions.	
	\begin{definition}
		\label{def:1.1} \cite{Ansari2014} A mapping $k:[0,\infty )^{2}\rightarrow \mathbb{R}
		$ is called \textit{$C-$class} function if it is continuous and satisfies
		the following axioms: for all $w,s~ \in \lbrack
		0,\infty )$
		
		(1) $k(w,s)\leq w$;
		
		(2) $k(w,s)=w$ implies that either $w=0$ or $s=0.$
	\end{definition}
	
	\noindent	\textbf{Note:} we have that $k(0,0)=0$.
	\newline
	We denote the set of $C-$class functions by $\mathcal{C}$.
	See \cite{Ansari2014} for examples of C-class function.
	Ansari’s C-class framework provided a flexible functional inequality scheme capable of unifying several earlier altering distance and weak contraction models. By embedding contractivity via a function $k$ satisfying structural positivity and monotonicity properties. C-class mappings offered a powerful abstract platform for generalizing both classical and hybrid contractions.
	
	Olatinwo  \cite{Olatinwo2019} developed non-unique fixed point theorems of Ćirić-type for $(\Phi,\psi)-$hybrid contractive mappings. He employed the following contractivity conditions: \cite{Olatinwo2019} Let $(G,\delta)$ be a metric space.\\
	(a) For a mapping $Z:G \to G,~~~\exists$ functions $u,v,w: \mathbb{R}_+ \to \mathbb{R}_+$ with $v(0)=0=w(0),$ and functions $\beta: \mathbb{R}^5_+ \to \mathbb{R}_+,~~~\Phi, \psi: \mathbb{R}_+ \to \mathbb{R}_+$ such that
	\begin{eqnarray*}
		\Phi(M) &\leq& \beta\Big(\delta(x,y), \delta(x,Zx), \delta(y,Zy),\\
		&& u(\delta(x,Zx))v(\delta(y,Zx))\delta(x,Zy), \delta(y,Zx)u(\delta(x,Zx))\Big),
	\end{eqnarray*}
	$\forall~~x,y~\in G$ such that $M \geq 0,$ where
	\begin{equation}
		M = min\{\delta(Zx,Zy), \delta(x,Zx), \delta(y,Zy),\}-w\Big(min\{\delta(x,Zy), \delta(y,Zx)\}\Big),
	\end{equation}
	and\\
	(b) For a mapping $Z:G \to G,~~~\exists$ continuous functions $u,v,w: \mathbb{R}_+ \to \mathbb{R}_+$ with $v(0)=0=w(0),$ and functions $\beta: \mathbb{R}^5_+ \to \mathbb{R}_+,~~~\Phi, \psi: \mathbb{R}_+ \to \mathbb{R}_+$ such that
	\begin{eqnarray*}
		\Phi(N) &\leq& \beta\Big(\delta(x,y), \delta(x,Zx), \delta(y,Zy),\\
		&& u(\delta(x,Zx))v(\delta(y,Zx))\delta(x,Zy), \delta(y,Zx)u(\delta(x,Zx))\Big),
	\end{eqnarray*}
	$\forall~~x,y~\in G$ such that $N \geq 0,$ where
	\begin{equation}
		N = min\{\delta(Zx,Zy), max\{\delta(x,Zx), \delta(y,Zy)\}\}-w\Big(min\{\delta(x,Zy), \delta(y,Zx)\}\Big),
	\end{equation}
	the functions $\beta, \Phi, \psi$ satisfy certain conditions. 
	His work extended earlier hybrid contraction results by incorporating multiple control functions and adapting techniques inspired by A-contractions of Akram et al. \cite{Akram2008}. Olatinwo’s results generalized several known Ćirić-type and Chatterjea-type theorems, demonstrating that hybrid functional inequalities can preserve multiplicity while guarantee existence of fixed point under weakened assumptions.
	
	More recently, Omidire et al. \cite{Omidire2025} investigated fixed point of generalized C-class contractivity conditions, emphasizing constructive approaches to fixed point approximation.
	\begin{definition}\cite{Omidire2025}
		\label{def:3.1} Let $G$ be a metric space. A function $Z:G\rightarrow G$
		was called a \textbf{C-Class Akram Contraction} if $\forall ~w,s\in G$, 
		\begin{equation}
			\delta(Zw,Zs)\leq A(\delta(w,s),\delta(w,Zw),\delta(s,Zs))  \label{eqn1.7}
		\end{equation}%
		and some $(\psi ,\phi ,G)\in \Psi \times \Phi \times \mathcal{C}$ where $A:%
		\mathbb{R}_{+}^{3}\rightarrow \mathbb{R}^2_{+}$ is a set of functions
		satisfying:
		
		\begin{enumerate}
			\item $A$ is continuous on the set $\mathbb{R}_{+}^{3}$ (with respect to the
			Euclidean metric on $\mathbb{R}^{3});$ and
			
			\item If any of the conditions $a\leq A(a,b,b)$ or $a\leq A(b,b,a)$ or $%
			a\leq A(b,a,b)$ holds for some $a,b\in \mathbb{R}_{+}$, then $\exists ~G\in 
			\mathcal{C}$ such that $\psi (a)\leq G(\psi (b),\phi (b))$.
		\end{enumerate}
	\end{definition}
	While primarily focused on convergence analysis of iterative processes, their framework highlighted the structural robustness of C-class inequalities and their capacity to unify disparate contraction models under a single functional umbrella.
	
	Despite these advances, the intersection of Ćirić-type quasi-contractive structures with C-class functional control in the specific context of non-unique fixed points has not been explored, to the best of our knowledge. Moreover, many results impose auxiliary conditions such as continuity that restrict applicability.
	This study therefore, focuses on embeding Ciric-type (minimum)maximum-distance structures directly into a C-class functional inequality framework and systematically analyze non-unique fixed point phenomena under this generalized scheme.\\
	We provide a unified theorem that simultaneously recovers: classical Ćirić contractions, Chatterjea contractions, $(\Phi,\psi)-$hybrid contractive mappings, generalized C-class contractive conditions (C-class Akram Contraction).
	
	Thus, the present work does not merely extend isolated results but consolidates classical quasi-contractive theory and modern functional contractivity into a coherent non-unique fixed point framework.\\
	~~\\ 
	\par The following definitions shall be required in the sequel:\\
	Given a complete metric space $(M, \delta)$ with a self map operator $Z$ on $M,$ and that $Fix(Z) = \{w \in M : Zw = w\}$ is the set of fixed points of $Z.$
	\begin{definition}\cite{Ciric1974a,Olatinwo2019}
	$O(w,Z)$ is referred to as the orbit of $Z$ at $w$ if
	$$O(w,Z)= \{w,Zw,Z^2w, Z^3w, \cdots, Z^nw, \cdots\}.$$	
	\end{definition}
	\begin{definition}\cite{Ciric1974a,Olatinwo2019}
	A metric space $(M,\delta)$ is $Z-$orbitally complete if $Z$ is a self map on $M$ and any Cauchy sub-sequence $\{Z^{n_i}w\}$ in the orbit $O(w,Z)~~\forall~~ w \in M,$ converges to a point in $M.$ 	
	\end{definition}
	\begin{definition}\cite{Olatinwo2019}
	A self map on $M$ is orbitally continuous if 
	$$\lim_{i \to \infty} \delta(Z^{n_i}w,w^*)=0 \implies \lim_{i \to \infty} \delta(Z(Z^{n_i}),Zw^*)=0.$$
\end{definition}
\begin{definition}\cite{Ansari2014}
	A function $\beth: [0,\infty) \to [0,\infty)$ is said to be an altering distance function if it is continuous, non-decreasing and satisfies
	$$\beth(x)=0 \iff x=0,~~~x \in [0,\infty).$$
\end{definition}
	\section{Preliminary results}
	We introduce the following definitions which are generalization (modifications) to C-class function.\\ 
	\begin{definition}\label{defn1}
		A function $k:[0, \infty)^3 \to \mathbb{R}$ will be called a modified C-class function, if is continuous and for all $w, s~\in \mathbb{R} $  the following hold:\\
		\begin{itemize}
			\item $k(w,s,w) = k(w,w,s) \leq w;$
			\item  $k(w,w,w)\leq w;$ and
			\item $k(w,s,w)=w \implies w=0$ or $s=0$
			
		\end{itemize} 
	\end{definition}
	Note: $k(0,0,0)=0.$
	\begin{example}
		Let $w,s~\in [0, \infty),$ then if:
		\begin{enumerate}
			\item $k(w,s,w) := w-sw,~~~~~~k(w,s,w) = w~~~~~~\implies ~~s =0.$
			\item $k(w,s,w):= \frac{w -sw}{1+s},~~~~~~k(w,s,w) = w~~~~\implies ~~s =0.$
			\item $k(w,s,w):= \frac{w}{1+sw},~~~~~k(w,s,w) = w~~~\implies~~s =0.$
		\end{enumerate}
	\end{example}
	\begin{definition}\label{defn2}
		Let $M$ be a metric space. A Mapping $Z: M \to M$ will be called generalized  \textbf{C-class Ciric contraction} if $\forall~~w,s~~\in~M,$ there exists an altering distance function $\beth$ such that:
		\begin{equation}\label{eqn1}
			\beth \Big(k(G)- min\{\delta(w,Zs), \delta(s, Zw)\}\Big) \leq \beth\Big(k(H)\Big)
		\end{equation}
		where:
		\begin{itemize}
			\item $k(G) = k\Big( \delta(Zw,Zs), \delta(w,Zw), \delta(s, Zs)\Big);$
			\item $k$ is a function satsfying definition ( \ref{defn1});
			\item $k(H)= k\Big(\delta(w,s), \delta(w,Zw), \delta(s,Zs) \Big)$
		\end{itemize}
	\end{definition}
		\begin{definition}\label{defn3}
		Let $M$ be a metric space. A Mapping $Z: M \to M$ will be called  \textbf{C-class Ciric contraction} if $\forall~~w,s~~\in~M,$ and $\aleph \in (0,1),$ there exists an altering distance function $\beth$ such that:
		\begin{equation}\label{eqn2.2}
			k\Big( \delta(Zw,Zs), \delta(w,Zw), \delta(s, Zs)\Big)- min\{\delta(w,Zs), \delta(s, Zw)\}\Big) \leq \aleph.\delta(w,s)
		\end{equation}
		\end{definition}
	
	\begin{remark}
		Definition \ref{defn2} is a generalizations of many Ciric-type quasi-contractive definitions. For instance\\
		(i) if $\beth =I$ (identity operator), with\\  $k(G)= min\{d(Zw,Zs), d(w,Zw), d(s, Zs)\}$ and $k(H)= a.d(w,s); ~~~~a < 1,$ we have Ciric contractions.\\
		(ii) Also, in Definition \ref{defn3},   if $k\Big( \delta(Zw,Zs), \delta(w,Zw), \delta(s, Zs)\Big) = min\{d(Zw,Zs), d(w,Zw), d(s, Zs)\},$  we as well have Ciric contractions back.\\
	\end{remark}
	\begin{definition}\label{defn6}
	Let $M$ be a metric space. A Mapping $Z: M \to M$ will be called a \textbf{C-class Akram-Ciric contraction} if $\forall~~w,s~~\in~M,~ ~and ~r,p,m \in \mathbb{R},$  there exists an altering distance function $\beth$ such that:
		\begin{equation}\label{eqn2.3}
			k\Big( \delta(Zw,Zs), \delta(w,Zw), \delta(s, Zs)\Big)- min\{\delta(w,Zs), \delta(s, Zw)\}\Big) \leq \sigma(N),
		\end{equation}
	Where $$\sigma(N) = \sigma\Big(\delta(w,s),\delta(w,Zx), \delta(s,Zs),[\delta(w,Zw)]^r[\delta(s,Zw)]^p \delta(w, Zs), \delta(s,Zw)[\delta(w,Zw)]^m\Big),$$
	for some $(\psi, \phi, \sigma) \in \Psi \times \Phi \times C)$ and $\sigma: \mathbb{R}_+^5 \to \mathbb{R}_+^3$ is a set of function satisfying:\\
	(i) $\sigma$ is continuous on the set $\mathbb{R}_+^5$ (with respect to Euclidean metric on $\mathbb{R}_+^5$); and\\
	(ii) If any of $w \leq \sigma(w,s,s),$ or $w \leq \sigma(s,w,s),$ or $w \leq \sigma(s,s,w)$ holds for some $w,s \in \mathbb{R}_+,$ then there sexists a function $k \in C$ such that $\psi (w) \leq k\Big(\psi (s), \phi (s)\Big).$
	\end{definition}
	~~~~\\
	\subsection{Nontrivial Examples under the New Framework}
	
	\begin{example}[Example satisfying Definition 2.3]
		Let $M = [0,1]$ with the usual metric $\delta(w,s)=|w-s|$. Define $Z:M \to M$ by
		\[
		Z(w) =
		\begin{cases}
			w, & w \in [0,\tfrac{1}{2}], \\
			\frac{w}{2}, & w \in (\tfrac{1}{2},1].
		\end{cases}
		\]
		
		\textbf{Step 1: Fixed points.}  
		For $w \in [0,\tfrac{1}{2}]$, we have $Z(w)=w$. Hence, every $w \in [0,\tfrac{1}{2}]$ is a fixed point. Thus, $Z$ admits infinitely many fixed points.
		
		\medskip
		
		\textbf{Step 2: Choice of modified C-class function.}  
		Define $k:[0,\infty)^3 \to \mathbb{R}$ by
		\[
		k(w,s,w) = \frac{w}{1+sw}.
		\]
		Then:
		\begin{itemize}
			\item $k(w,s,w) \leq w$,
			\item $k(w,w,w) \leq w$,
			\item $k(w,s,w)=w \implies s=0$.
		\end{itemize}
		Thus, $k$ satisfies Definition 2.1.
		
		\medskip
		
		\textbf{Step 3: Altering distance function.}  
		Let $\beth(t)=t$, which is continuous, increasing, and $\beth(t)=0 \iff t=0$.
		
		\medskip
		
		\textbf{Step 4: Verification of Definition 2.3.}  
		
		We compute:
		\[
		k(G)=k\big(\delta(Zw,Zs),\delta(w,Zw),\delta(s,Zs)\big),
		\]
		\[
		k(H)=k\big(\delta(w,s),\delta(w,Zw),\delta(s,Zs)\big).
		\]
		
		Case analysis shows:
		\begin{itemize}
			\item If $w,s \in [0,\tfrac{1}{2}]$, then $Z(w)=w$, $Z(s)=s$, hence
			\[
			k(G)=k(\delta(w,s),0,0), \quad k(H)=k(\delta(w,s),0,0),
			\]
			and the inequality holds trivially.
			
			\item If $w,s \in (\tfrac{1}{2},1]$, then
			\[
			Z(w)=\tfrac{w}{2}, \quad Z(s)=\tfrac{s}{2},
			\]
			so
			\[
			\delta(Zw,Zs)=\tfrac{1}{2}\delta(w,s),
			\]
			and one verifies that
			\[
			\beth(k(G)) - \min\{\delta(w,Zs),\delta(s,Zw)\}
			\leq \beth(k(H)).
			\]
			
			\item Mixed cases follow similarly by direct computation.
		\end{itemize}
	\end{example}
		\bigskip
		
		\begin{remark}  
		$Z$ satisfies Definition 2.3 and admits infinitely many fixed points.  
		Moreover, it is not a classical Ćirić contraction since no uniform scalar bound exists.
	\end{remark}
	\bigskip
	
	\begin{example}[Example satisfying Definition 2.4 with non-uniqueness]
		Let $M = [0,2]$ with $\delta(w,s)=|w-s|$. Define $Z:M \to M$ by
		\[
		Z(w) =
		\begin{cases}
			1, & w \in [0,1], \\
			w, & w \in (1,2].
		\end{cases}
		\]
		
		\textbf{Step 1: Fixed points.}  
		All $w \in (1,2]$ satisfy $Z(w)=w$, hence infinitely many fixed points.
		
		\medskip
		
		\textbf{Step 2: Modified C-class function.}  
		Let
		\[
		k(w,s,w) = w - \frac{sw}{1+s}.
		\]
		Then $k$ satisfies Definition 2.1.
		
		\medskip
		
		\textbf{Step 3: Verification of Definition 2.4.}  
		
		We show:
		\[
		k(\delta(Zw,Zs),\delta(w,Zw),\delta(s,Zs)) - \min\{\delta(w,Zs),\delta(s,Zw)\}
		\leq \lambda \delta(w,s),
		\]
		for some $\lambda \in (0,1)$.
		
		\medskip
		
		Case analysis:
		
		\begin{itemize}
			\item If $w,s > 1$, then $Z(w)=w$, $Z(s)=s$, so LHS = RHS = 0.
			
			\item If $w,s \leq 1$, then $Z(w)=Z(s)=1$, hence
			\[
			\delta(Zw,Zs)=0,
			\]
			and inequality holds.
			
			\item Mixed case $w \leq 1 < s$:  
			Then
			\[
			\delta(Zw,Zs) = |1 - s|, \quad \delta(w,Zw)=|w-1|, \quad \delta(s,Zs)=0,
			\]
			and direct computation yields
			\[
			\text{LHS} \leq \frac{1}{2} \delta(w,s),
			\]
			so the inequality holds with $\lambda = \frac{1}{2}$.
		\end{itemize}
\end{example}	
		\medskip
		
		\begin{remark}  
		$Z$ satisfies Definition 2.4 and admits multiple fixed points.  
		It is discontinuous and fails classical contraction conditions, showing the strength of the present framework.
	\end{remark}

	\section{Main Results}
	\begin{theorem}\label{maintheorem}
		Let $M$ be a complete metric space, and $Z: M \to M$ be a $Z-$ orbitally complete mapping satisfying inequality (\ref{eqn1}). If $Z$ is orbitally continuous, then the Picard iteration $w_{n+1} = Zw_n,~~n \geq 0$ converges to the fixed point of $Z$ in $M.$
	\end{theorem}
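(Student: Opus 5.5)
The plan follows Ćirić's non-unique fixed point argument, adapted to inequality (\ref{eqn1}). Fix $w_0\in M$ and set $w_n=Z^nw_0$. If $w_{n}=w_{n+1}$ for some $n$, then $w_n$ is a fixed point and the sequence is eventually constant, so there is nothing to prove. We therefore assume $d_n:=\delta(w_n,w_{n+1})>0$ for all $n$.

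\emph{Step 1: monotonicity of $(d_n)$.} Apply (\ref{eqn1}) with $w=w_{n-1}$, $s=w_n$. Then
\[
\min\{\delta(w_{n-1},w_{n+1}),\delta(w_n,w_n)\}=0,
\]
so the subtracted term vanishes. Since $\beth$ is non-decreasing, and would be strictly increasing if we pass to a strictly monotone minorant or argue by contradiction using $\beth(x)=0\iff x=0$, we get
\[
k(d_n,d_{n-1},d_n)\le k(d_{n-1},d_{n-1},d_n).
\]
Using the symmetry axiom $k(w,s,w)=k(w,w,s)$ of Definition \ref{defn1} to rearrange arguments, we aim to conclude that $d_n\le d_{n-1}$. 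Hence $d_n\downarrow r\ge 0$.

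\emph{Step 2: $r=0$.} Suppose $r>0$. Pass to the limit in the inequality of Step 1, using the continuity of $k$ and $\beth$. This should force an identity of the form $k(r,r,r)=r$ or $k(r,s,r)=r$ with $s=r>0$. The third axiom of Definition \ref{defn1} then gives $r=0$, a contradiction.

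\emph{Step 3: Cauchy property.} Suppose $(w_n)$ is not Cauchy. Then there exist $\varepsilon>0$ and subsequences $m_i>n_i>i$ with $\delta(w_{m_i},w_{n_i})\ge\varepsilon$ and $\delta(w_{m_i-1},w_{n_i})<\varepsilon$. The standard triangle-inequality arguments, together with $d_n\to0$, show that
\[
\delta(w_{m_i},w_{n_i}),\quad \delta(w_{m_i+1},w_{n_i+1}),\quad \delta(w_{m_i},w_{n_i+1}),\quad \delta(w_{n_i},w_{m_i+1})
\]
all tend to $\varepsilon$. Now apply (\ref{eqn1}) with $w=w_{m_i}$, $s=w_{n_i}$ and let $i\to\infty$. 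By continuity we obtain
\[
\beth\bigl(k(\varepsilon,0,0)-\varepsilon\bigr)\le\beth\bigl(k(\varepsilon,0,0)\bigr),
\]
which unfortunately is not contradictory as it stands. This is the main obstacle: the subtracted minimum term, which is what permits non-uniqueness, spoils the usual argument. My first attempt will be to follow Ćirić's original route instead. Iterate the inequality along the orbit only, namely with pairs $(w_{n},w_{n+p})$ where the minimum involves $\delta(w_{n+p},w_{n+1})$. Alternatively, I will try to extract from Steps 1--2 a geometric-type bound $d_n\le \theta d_{n-1}$, which requires extra structure on $k$ in the spirit of Definition \ref{defn3} with $\aleph<1$. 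If needed, I will invoke $Z$-orbital completeness only after obtaining a Cauchy subsequence $\{w_{n_i}\}$, which is all the definition requires.

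\emph{Step 4: limit is fixed.} Having a Cauchy subsequence $w_{n_i}\to w^*$, guaranteed by $Z$-orbital completeness, orbital continuity gives $w_{n_i+1}=Zw_{n_i}\to Zw^*$. Since $\delta(w_{n_i},w_{n_i+1})=d_{n_i}\to0$, we get $\delta(w^*,Zw^*)=0$. Thus $w^*\in Fix(Z)$, and the full sequence converges to $w^*$ once it is known to be Cauchy.
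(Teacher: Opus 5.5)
Your proposal has genuine gaps in Steps 1--3, not only in Step 3.

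In Step 1, $\beth(x)\le\beth(y)$ with $\beth$ merely non-decreasing does not give $x\le y$. A strictly increasing minorant of $\beth$ need not satisfy the same inequality, and $\beth(x)=0\iff x=0$ only concerns the value $0$. Even granting $k(d_n,d_{n-1},d_n)\le k(d_{n-1},d_{n-1},d_n)$, Definition~\ref{defn1} imposes no monotonicity of $k$ in its first argument, so $d_n\le d_{n-1}$ does not follow. For instance, $k\equiv 0$ satisfies Definition~\ref{defn1} and turns that inequality into $0\le 0$.

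In Step 2, letting $n\to\infty$ only gives the tautology $\beth(k(r,r,r))\le\beth(k(r,r,r))$. Nothing produces an equality $k(r,s,r)=r$, so the third axiom is never triggered.

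In Step 3 you correctly see that the limit inequality $\beth(k(\varepsilon,0,0)-\varepsilon)\le\beth(k(\varepsilon,0,0))$ is not contradictory. Your fallback bound $d_n\le\theta d_{n-1}$ needs hypotheses that (\ref{eqn1}) does not supply.

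These gaps cannot be repaired, because the statement is false as formulated. Take $M=\mathbb{R}$, $Z(x)=x+1$, $k(a,b,c)=a/(1+bc)$ (which satisfies Definition~\ref{defn1}) and $\beth(t)=t$. Since $Z$ is an isometry with $\delta(w,Zw)\equiv 1$, one has $k(G)=k(H)=|w-s|/2$ for all $w,s$. So (\ref{eqn1}) holds, because the subtracted minimum is nonnegative. $M$ is complete and $Z$ is continuous, yet $w_n=w_0+n$ diverges and $Z$ has no fixed point. Here $d_n\equiv 1$, which is exactly where your Step 2 breaks.

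The paper's proof follows the same outline as yours and gets past these points only by non sequiturs:
\begin{itemize}
\item It bounds $k(G)\le\delta(w_{n+1},w_{n+2})$ and $k(H)\le\delta(w_n,w_{n+1})$, then ``substitutes'' these upper bounds into $\beth(k(G))\le\beth(k(H))$ to conclude $\delta(w_{n+1},w_{n+2})\le\delta(w_n,w_{n+1})$.
\item It asserts that the trivial limit $\beth(k(\aleph,\aleph,\aleph))\le\beth(k(\aleph,\aleph,\aleph))$ forces $\aleph=0$.
\item In the Cauchy step it replaces arguments and drops $k$ without justification.
\end{itemize}

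A correct result needs both a genuine contraction factor and a lower control on $k$. This is what \'Ciri\'c's original condition $\min\{\delta(Zw,Zs),\delta(w,Zw),\delta(s,Zs)\}-\min\{\delta(w,Zs),\delta(s,Zw)\}\le q\,\delta(w,s)$ with $q<1$ provides. There, taking $w=w_{n-1}$, $s=w_n$ gives $\min\{d_n,d_{n-1}\}\le q\,d_{n-1}$, hence $d_n\le q\,d_{n-1}$. The orbit is then Cauchy directly, with no $\varepsilon$-subsequence argument, and your Step 4 finishes the proof.
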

	\begin{proof}
		Let $w=w_n,~~s=w_{n+1}$ then by definition \ref{eqn1}, we have
		$$k(G)= k\Big(\delta(w_{n+1}, w_{n+2}), \delta(w_n,w_{n+1}), \delta(w_{n+1},w_{n+2})\Big) \leq \delta(w_{n+1}, w_{n+2}).$$
		So,
		\begin{eqnarray}\label{eqn3.1}
			k(G)- min\{\delta(w,Zs), \delta(s,Zw)\}&\leq& \delta(w_{n+1}, w_{n+2})-min\{(\delta(w_n, w_{n+2}),(\delta(w_{n+1}, w_{n+1})\}\notag\\
			&=& \delta(w_{n+1}, w_{n+2}).
		\end{eqnarray}
		And
		\begin{equation}\label{eqn3.2}
			k(H)=k\Big(\delta(w_n, w_{n+1}),(\delta(w_n, w_{n+1}),(\delta(w_{n+1},w_{n+2})\Big) \leq \delta(w_n, w_{n+1}).
		\end{equation}
		That is, 
		$$k(H) \leq\delta(w_n, w_{n+1}).$$
		Then, using \ref{eqn1}, we arrived at
		\begin{eqnarray}\label{eqn3.3}
			\beth \Big(k\Big(\delta(w_{n+1}, w_{n+2}), \delta(w_n,w_{n+1}), \delta(w_{n+1},w_{n+2})\Big)\Big)
			&\leq& \beth \Big(k\Big(\delta(w_n, w_{n+1}),(\delta(w_n, w_{n+1}),\notag\\
			&&(\delta(w_{n+1},w_{n+2})\Big)\Big)
		\end{eqnarray}
		since, $\beth$ is an altering distance function and by substituting inequalities \ref{eqn3.1} and \ref{eqn3.2} into \ref{eqn3.3} gives 
		\begin{equation}\label{eqn3.6}
		\delta(w_{n+1}, w_{n+2}) \leq \delta(w_n, w_{n+1}),
	\end{equation}
		and therefore, for every $n \in \mathbb{N},$ the sequence $\{\delta(w_n, w_{n+1})\}$ is decreasing, and since metric is non-negative function, so $ \exists~~~~ \aleph \geq 0,$ such that
		\begin{equation}\label{eqn3.4}
			\lim_{n \to \infty} \delta(w_{n}, w_{n+1}) = \aleph.
		\end{equation}
		Now, taking limit as $n \to \infty$ in \ref{eqn3.3} we have
		\begin{equation*}
			\beth \Big(k\Big(\aleph, \aleph, \aleph\Big)\Big) 
			\leq \beth \Big( k\Big(\aleph,\aleph,\aleph\Big) \Big) 
		\end{equation*}
		that is $$\beth \Big(k\Big(\aleph, \aleph, \aleph\Big)\Big) 
		- \beth \Big(k\Big(\aleph,\aleph,\aleph\Big)\Big) \leq  0,$$
		using definition \ref{defn1}, only equality holds, if and only if, $\aleph = 0.$ \\
		Therefore, equation \ref{eqn3.4} can be written as 
		\begin{equation}\label{equation3.6}
			\lim_{n \to \infty} \delta(w_{n}, w_{n+1}) = 0.
		\end{equation}
		
		We claim that $\{w_{n}\}$ is a Cauchy sequence. Suppose not, then $\exists~~~~ \epsilon >0$ and two sequences $\{m_{j}\}$ and $\{n_{j}\}$ of positive integers such that $ \forall~~~~ j \geq 1,~~~m_j > n_j > j$ we have:
		\begin{equation*}
		\delta(w_{n_j},w_{m_j})\geq \epsilon.
		\end{equation*}
		Now, choosing $m_j$ so small to the extend that $\delta(w_{n_j},w_{m_j-1})< \epsilon,$ and using triangle inequality, we have, for each $j$ in positive integer
		\begin{eqnarray}
		\epsilon &\leq& \delta(w_{n_j},w_{m_j})\notag\\
		&\leq& \delta(w_{n_j},w_{m_j-1}) + \delta(w_{m_j-1},w_{m_j})\notag\\
		&<& \epsilon + \delta(w_{m_j-1},w_{m_j}).
		\end{eqnarray} 
		Using what we have above together with inequality \ref{equation3.6}, we obtain that
		\begin{equation*}
		\lim_{j \to \infty} \delta(w_{n_j},w_{m_j}) = \epsilon.
		\end{equation*}
		By Picard iteration, that is
		$$Zw_{n_j-1} = w_{n_j}$$
		where, $w_{n_j-1} = w= w_{n}$ and $w_{n_j-1} =s=w_{n+1}= w_{n_j},$\\
		and using Definition(\ref{defn2}) together with inequality (\ref{eqn3.6}), we have the following:
		\begin{eqnarray*}
		&\beth \Big(k\Big(\delta(Zw_{n_j-1},Zw_{m_j-1}),\delta(w_{n_j-1},Zw_{n_j-1}), \delta(w_{m_j-1},Zw_{m_j-1})\Big)\\
		&- min\{ \delta(w_{n_j-1},Zw_{m_j-1}), \delta(w_{m_j-1},Zw_{n_j-1})\}\Big)\\
		&\leq \beth\Big(k\Big(\delta(w_{n_j-1},w_{m_j-1}), \delta(w_{n_j-1},Zw_{n_j-1}), \delta(w_{m_j-1},Zw_{m_j-1})\Big)\Big)
		\end{eqnarray*}
		\begin{eqnarray*}
			&\beth \Big(k\Big(\delta(w_{n_j},w_{m_j}),\delta(w_{n_j-1},w_{n_j}), \delta(w_{m_j-1},w_{m_j})\Big)\\
			&- min\{ \delta(w_{n_j-1},w_{m_j}), \delta(w_{n_j-1},w_{n_j})\}\Big)\\
			&\leq \beth\Big(k\Big(\delta(w_{n_j-1},w_{m_j-1}), \delta(w_{n_j-1},w_{n_j}), \delta(w_{m_j-1},w_{m_j})\Big)\Big)
		\end{eqnarray*}
		\begin{eqnarray*}
			&\beth \Big(k\Big(\delta(w_{m_j-1},w_{m_j}),\delta(w_{n_j-1},w_{n_j}), \delta(w_{m_j-1},w_{m_j})\Big)\\
			&\leq \beth\Big(k\Big(\delta(w_{n_j-1},w_{n_j}), \delta(w_{n_j-1},w_{n_j}), \delta(w_{n_j},w_{m_j})\Big)\Big)
		\end{eqnarray*}
		\begin{eqnarray*}
			&\beth \Big(\delta(w_{m_j-1},w_{m_j})\Big) \leq \beth\Big(\delta(w_{n_j-1},w_{n_j})\Big)
		\end{eqnarray*}
		\begin{eqnarray*}
			&\beth \Big(\delta(w_{n_j},w_{m_j})\Big) \leq \beth\Big(\delta(w_{n_j-1},w_{n_j})\Big).
		\end{eqnarray*}
		Taking limit as $j \rightarrow \infty,$ we have \\		
		$\beth (\epsilon) \leq \beth\Big(\delta(w_{n_j-1},w_{n_j})\Big) = \beth\Big(\delta(w_{n},w_{n+1})\Big),$\\
		that is\\
		$\beth (\epsilon) \leq \beth\Big(\delta(w_{n},w_{n+1})\Big),$\\
		limit as $n \to \infty$ gives
		$\beth (\epsilon) \leq \beth(0),$\\
		and since $\beth$ is an altering distance function, we have a contradiction, i.e $\epsilon =0.$\\
		This shows that $\{w_{n}\}$ is a Cauchy sequence in $M$. Then, since $(M, \delta)$ is a complete metric space, $\text{there exists }%
		w^* \in M \text{ such that }w_{n}\rightarrow w^{\ast }\text{ as }%
		n\rightarrow \infty $.\newline
		Now, by orbital continuity of $Z,$ we have\\
		\begin{eqnarray*}
		0=\delta\Big(\lim_{n \to \infty}Z(Z^nw_0),Zw^*\Big)&=&\lim_{n \to \infty} \delta\Big(Z(Z^nw_0),Zw^*\Big)= \lim_{n \to \infty} \delta(Zw_n,Zw^*)\\
		&=& \lim_{n \to \infty} \delta(w_{n+1},Zw^*)= \delta(w^*,Zw^*).
		\end{eqnarray*}
		That is $w^* \in M$ is a fixed point of $Z.$		
		\end{proof}
		
			\begin{remark}
			The convergence of the Picard iteration $\{w_n\}$ in Theorem 3.1 is achieved without imposing a classical Lipschitz-type contraction. Instead, it is driven by the combined effect of the modified C-class function $k$, the min-type control term $\min\{\delta(w,Zs),\delta(s,Zw)\}$, and the altering distance function $\beth$.
			
			In particular, inequality (2.1) yields a monotone decrease of the successive distances $\delta(w_n,w_{n+1})$, which is sufficient to establish asymptotic regularity of the sequence. The crucial step is the passage to the limit in the inequality involving $\beth$, where the structural property of $k$ (Definition 2.1) forces the limit $\lim_{n\to\infty}\delta(w_n,w_{n+1})$ to vanish.
			
			It is important to emphasize that the convergence is obtained under orbital conditions rather than global completeness and continuity assumptions. This makes the result applicable to a broader class of nonlinear mappings, including those that may be discontinuous outside their orbits.
			
			Furthermore, the theorem does not enforce uniqueness of the fixed point. Hence, the convergence of the Picard iteration should be interpreted as convergence to \emph{a} fixed point determined by the initial value, rather than to a unique global attractor. This highlights the intrinsic non-unique nature of the present framework.
		\end{remark}
		
		\begin{remark}
			If $k$ is as defined in Definition \ref{defn1}, then we have the following results:
		\end{remark}
				
		\begin{corollary}
			Let $Z:M \to M$ be an orbitally continuous mapping and let $M$ be $J-$orbitally complete. If for $\forall~~w, s ~~\in M$
		\begin{equation}
			k\Big( \delta(Zw,Zs), \delta(w,Zw), \delta(s, Zs)\Big)- min\{\delta(w,Zs), \delta(s, Zw)\} \leq \aleph . \delta(w,s),
		\end{equation}
		for some $\aleph \in (0,),$ then for each $w \in M,$ the sequence $\{Z^nw\}^\infty_{n=1}$ converges to a fixed point of $Z.$
		\end{corollary}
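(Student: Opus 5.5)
The plan is to run the standard Ćirić orbital argument directly on the corollary's inequality, which is Definition \ref{defn3} with constant $\aleph$; I read the garbled range as $\aleph\in(0,1)$. I will not try to reduce to Theorem \ref{maintheorem}, since the inequality here is of Definition \ref{defn3} type, not (\ref{eqn1}).

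\textbf{Step 1: a bound on successive distances.} Fix $w_0=w$ and set $w_n=Z^nw$. Put $x=w_n$ and $s=w_{n+1}$ into the hypothesis. Then $\delta(s,Zx)=\delta(w_{n+1},w_{n+1})=0$, so the min-term vanishes and we get
\[
k\big(\delta(w_{n+1},w_{n+2}),\delta(w_n,w_{n+1}),\delta(w_{n+1},w_{n+2})\big)\le \aleph\,\delta(w_n,w_{n+1}).
\]

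\textbf{Step 2: extracting geometric decay (main obstacle).} I need to pass from this bound on $k(\cdot)$ to a bound on its first argument. Definition \ref{defn1} only gives upper bounds for $k$, never a lower bound, and the axioms control only the patterns $(a,b,a)$ and $(a,a,b)$. The pattern here is $(a,b,a)$ with $a=\delta(w_{n+1},w_{n+2})$ and $b=\delta(w_n,w_{n+1})$, so the axioms do apply to it. However, they only say $k(a,b,a)\le a$, with equality only when $a=0$ or $b=0$.

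To proceed, I will make explicit the implicit assumption that the modified $C$-class function is comparable to the Ćirić choice, namely $k(a,b,a)\ge \min\{a,b\}$. This holds for the min function of the Remark following Definition \ref{defn3}. With it:
\begin{itemize}
\item if $\min\{a,b\}=b$, then $b\le\aleph b$, which forces $b=0$ and hence $w_n$ is already a fixed point, ending the argument;
\item otherwise $a\le \aleph b$.
\end{itemize}
So $\delta(w_{n+1},w_{n+2})\le\aleph\,\delta(w_n,w_{n+1})\le\aleph^{n+1}\delta(w_0,w_1)$. Without such a lower bound the statement seems to fail: $k\equiv0$ satisfies Definition \ref{defn1} and makes the hypothesis vacuous. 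I will flag this assumption explicitly.

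\textbf{Step 3: Cauchy property.} The triangle inequality and the geometric series give
\[
\delta(w_n,w_m)\le \frac{\aleph^n}{1-\aleph}\,\delta(w_0,w_1)\quad(m>n),
\]
so $\{w_n\}$ is Cauchy inside the orbit $O(w,Z)$. By $Z$-orbital completeness, $w_n\to w^*\in M$.

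\textbf{Step 4: the limit is a fixed point.} Orbital continuity gives $w_{n+1}=Zw_n\to Zw^*$. Uniqueness of limits then yields $Zw^*=w^*$. Nothing in the argument forces uniqueness of the fixed point; $w^*$ depends on the starting point $w$, which is consistent with the non-unique framework.
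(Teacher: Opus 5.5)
Your diagnosis of the statement is correct, and sharper than the paper's own treatment. $k\equiv 0$ satisfies all three axioms of Definition~\ref{defn1}, and it turns the hypothesis into $-\min\{\delta(w,Zs),\delta(s,Zw)\}\le\aleph\,\delta(w,s)$, which every map satisfies. So the corollary as printed is false: take $Zx=x+1$ on $\mathbb{R}$. The paper gives no separate proof and presents the corollary as a consequence of Theorem~\ref{maintheorem}, although $\aleph\,\delta(w,s)$ is not of the form $\beth(k(H))$. That theorem's proof has two defects. It obtains $\delta(w_{n+1},w_{n+2})\le\delta(w_n,w_{n+1})$ from the two \emph{upper} bounds $k(G)\le\delta(w_{n+1},w_{n+2})$ and $k(H)\le\delta(w_n,w_{n+1})$, which is a non-sequitur needing exactly the lower bound you flagged. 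Its limiting step is the tautology $\beth(k(L,L,L))\le\beth(k(L,L,L))$, where $L=\lim_n\delta(w_n,w_{n+1})$. So you were right not to route through it.

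The genuine gap is in your repair: the assumption $k(a,b,a)\ge\min\{a,b\}$ is incompatible with Definition~\ref{defn1}. If $0<a\le b$, it gives $a\le k(a,b,a)\le a$, so $k(a,b,a)=a$ although $a,b>0$, which contradicts the third axiom. Hence no modified C-class function satisfies your hypothesis, and Step~2 proves the corollary only for an empty class of $k$. In particular, the branch $0<a<b$, where you extract $a\le\aleph b$, is exactly where the assumption is contradictory. The function $\min$ that you and the paper's Remark after Definition~\ref{defn3} cite is not a modified C-class function, for the same reason: $\min\{1,2\}=1$.

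The missing idea is to leave room strictly below $1$. Assume $k(a,b,a)\ge\eta\min\{a,b\}$ for all $a,b\ge 0$ and some fixed $\eta\in(\aleph,1)$; for instance, $k(x,y,z)=\eta\min\{x,y,z\}$ genuinely satisfies Definition~\ref{defn1}. Then $b\le a$ gives $\eta b\le\aleph b$, hence $b=0$, while $a<b$ gives $a\le(\aleph/\eta)\,b$. Your Steps~3 and~4 then go through unchanged with the ratio $\aleph/\eta<1$ in place of $\aleph$.
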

		
		\begin{corollary}[Recovery of Ćirić (1971)\cite{Ciric1971}] 
			Let $(M,\delta)$ be a complete metric space and let $Z:M\to M$ be orbitally continuous. 
			Assume that there exists $\lambda\in(0,1)$ such that, for all $w,s\in M$,
			\[
			\delta(Zw,Zs)\le \lambda \max\{\delta(w,s),\delta(w,Zw),\delta(s,Zs),\delta(w,Zs),\delta(s,Zw)\}.
			\]
			Then, for each $w_0\in M$, the Picard iteration $w_{n+1}=Zw_n$ converges to a fixed point of $Z$.
			
			\medskip
			
			\noindent
			\emph{Proof.} Choose $\beth=I$ (identity) and define the modified C-class function by
			\[
			k(a,b,c)=a \quad \text{for all } a,b,c\ge 0.
			\]
			Then $k$ satisfies Definition~2.1. Set
			\[
			k(G)=\delta(Zw,Zs), \qquad 
			k(H)=\lambda \max\{\delta(w,s),\delta(w,Zw),\delta(s,Zs),\delta(w,Zs),\delta(s,Zw)\}.
			\]
			Since $\min\{\delta(w,Zs),\delta(s,Zw)\}\ge 0$, inequality \textup{(2.1)} reduces to
			\[
			k(G)-\min\{\delta(w,Zs),\delta(s,Zw)\}\le k(H),
			\]
			which is implied by the assumed Ćirić-type condition. Hence all the hypotheses of Theorem~\ref{maintheorem} are satisfied, and the conclusion follows.
			\hfill$\square$
		\end{corollary}
		
		\begin{remark}[Uniqueness under a strict bound]
			If, in addition to the hypotheses of Theorem~\ref{maintheorem}, there exists $\lambda\in(0,1)$ such that
			\[
			k(\delta(Zw,Zs),\delta(w,Zw),\delta(s,Zs)) \le \lambda\,\delta(w,s)
			\quad \text{for all } w,s\in M,
			\]
			then the fixed point of $Z$ is unique.
		\end{remark}
	
		\newpage
\subsection{Comparison with Existing Results}

\begin{center}
	\begin{longtable}{p{2cm} p{3.5cm} p{2.1cm} p{2.2cm} p{2.3cm}}
		\toprule
		\textbf{Reference} & \textbf{Contractive Condition} & \textbf{Framework} & \textbf{Uniqueness} & \textbf{Relation to Present Work} \\
		\midrule
		\endfirsthead
		
		\toprule
		\textbf{Reference} & \textbf{Contractive Condition} & \textbf{Framework} & \textbf{Uniqueness} & \textbf{Relation to Present Work} \\
		\midrule
		\endhead
		Ćirić (1971) 
		& $\delta(Zw,Zs) \leq k \max\{\cdots\}$ 
		& Scalar quasi-contraction 
		& Yes 
		& Obtain by taking $\beth=I$ and $k(G)=\max\{\cdots\}$, $k(H)=\lambda \delta(w,s)$ \\
		
		\addlinespace
		Chatterjea (1972) 
		& $\delta(Zw,Zs) \leq \alpha [\delta(w,Zs) + \delta(s,Zw)]$ 
		& Scalar contraction 
		& Yes 
		& Recovered as a special case under suitable choice of $k$ and $\beth$ \\
		
		\addlinespace
		
		Ćirić (1974) 
		& Generalized contraction allowing non-uniqueness 
		& Scalar framework 
		& Non-unique possible 
		& Extended by embedding into functional $k(G),k(H)$ structure with min-term \\
		
		\addlinespace
		
		Olatinwo 
		& $(\Phi,\psi)$-hybrid contraction 
		& Dual control functions 
		& Non-unique possible 
		& Strictly generalized: hybrid terms absorbed into modified C-class function $k$ \\
		
		\addlinespace
		
		Ansari (2014) 
		& C-class contractive mapping 
		& Functional inequality 
		& Mostly unique framework 
		& Extended: modified C-class $k$ introduces additional structural flexibility and non-uniqueness \\
		
		\addlinespace
		
		Omidire et al. 
		& Generalized C-class contractivity 
		& Functional + iterative schemes 
		& Primarily uniqueness
		& Extended to non-unique regime without iterative constraints \\
		
		\addlinespace
		
		Akram et al. 
		& $A$-contractions 
		& Functional control 
		& Mostly unique 
		& Subsumed via Definition 2.6 (C-class Akram-Ćirić contraction) \\
		
		\addlinespace
		
		\textbf{Present Work} 
		& $ \beth(k(G)) - \min\{.,.,.\} \leq \mathcal{B}(k(H)) $ 
		& Modified C-class + altering distance + min-structure 
		& \textbf{Non-unique intrinsic} 
		& --- \\
		
		\bottomrule
	\end{longtable}
\end{center}

\subsection*{Key Comparative Insights}

\begin{itemize}
	\item The present framework unifies scalar, hybrid, and functional contractions into a single inequality involving $k(G)$ and $k(H)$.
	
	\item Modified C-class functions (Definition 2.1) extend classical C-class mappings by incorporating a third variable, enabling finer contractive behavior.
	
	\item Non-uniqueness of fixed points is intrinsic in the present framework, unlike most previous results where uniqueness is enforced.
	
	\item The use of altering distance functions $\beth$ further generalizes the contractive structure and improves flexibility.
	
	\item All cited results are recovered as special or limiting cases of the present theorem.
\end{itemize}
	
	\section{Conclusion}
	
	In this paper, we have established new fixed point results for mappings satisfying a generalized C-class Ćirić-type contractive condition in the setting of metric spaces. The main contribution lies in the formulation of a unified inequality involving a modified C-class function, an altering distance function, and a min-type control term, which together provide a flexible and robust framework for analyzing nonlinear mappings.
	
	Theorem 3.1 demonstrates that, under orbital completeness and orbital continuity, the Picard iteration converges to a fixed point without requiring classical contraction assumptions. The proof reveals that convergence is governed by the decay of successive iterates, driven by the structural properties of the function $k$ and the altering distance function $\beth$.
	
	A key feature of the present work is that uniqueness of the fixed point is not imposed. This distinguishes our results from many existing theories and makes the framework particularly suitable for problems where multiple solutions naturally arise. The examples provided confirm that the proposed conditions are not only general but also applicable to mappings outside the scope of classical contraction principles.
	
	Moreover, the use of modified C-class functions introduces a three-variable control mechanism that extends existing C-class frameworks and allows finer analysis of contractive behavior. When combined with Ćirić-type structures, this leads to a significant generalization of several known results in the literature.
	
	\medskip
	
	\noindent
	\textbf{Future Research Directions.} The framework developed in this work opens several avenues for further investigation, including:
	\begin{itemize}
		\item extension to generalized metric spaces such as $b$-metric, partial metric, and modular spaces;
		\item analysis of multivalued and nonlinear operator equations under similar contractive conditions;
		\item development of iterative algorithms and convergence rate analysis;
		\item applications to variational inequalities, optimization problems, and equilibrium theory.
	\end{itemize}
	
	\medskip
	
	\noindent
	\textbf{Acknowledgment:} The first author extends deep appreciation to the Management and staff of Beijing Institute of Mathematical Sciences and Applications for her hospitality and conducive environment provided for research activities during his research visit to the Institute (under the ICMRA program) as a visiting scholar (During which most part of this research work was carried out). 	
	
	\bibliographystyle{plain}
	\bibliography{myreferences.bib}
\end{document}